\let\@fnsymbol\@arabic
\theoremstyle{plain}
\newtheorem{theorem}{\bf Theorem}[section]
\newtheorem{example1}[theorem]{\bf Example}
\newtheorem{proposition}[theorem]{Proposition}
\newtheorem{problem}[theorem]{Problem}
\newtheorem{question}[theorem]{Question}
\theoremstyle{definition}
\newtheorem{remark}[theorem]{Remark}
\newcommand{\cd}{\operatorname{cd} }
\newcommand{\init}{\operatorname{in} }
\newcommand{\N}{\mathbb{N}}
\newcommand{\Z}{\mathbb{Z}}
\newcommand{\PP}{\mathbb{P}}
\newcommand{\F}{\mathcal{F}}
\newcommand{\D}{\Delta}
\newcommand{\Min}{\operatorname{Min} }
\newcommand{\height}{\operatorname{height} }
\newcommand{\Ker}{\operatorname{Ker} }
\newcommand{\Proj}{\operatorname{Proj} }
\newcommand{\Spec}{\operatorname{Spec} }
\newcommand{\depth}{\operatorname{depth} }
\newcommand{\pp}{\mathfrak{p}}
\newcommand{\kk}{\Bbbk}
\newcommand{\mm}{\mathfrak{m}}
\newcommand{\Lyu}{\mathfrak{L}}
\renewcommand{\O}{\mathcal{O}}
\newcommand{\homo}{\operatorname{hom} }
\definecolor{mypink}{RGB}{215, 5, 234}
\begin{document}
\title{Connectivity of hyperplane sections of domains}
\author{
Matteo Varbaro \thanks{Supported by the \lq\lq program for abroad research activity of the University of Genoa\rq\rq, Pr. N, 100021-2016-MV-ALTROUNIGE$\_$001, and by the GNSAGA-INdAM} \\
\small Dip. di Matematica\\ \small Universit\`{a}  di Genova\\
\footnotesize \url{varbaro@dima.unige.it} }

 \date{}
\maketitle

\begin{flushright} \em \footnotesize
To Gennady Lyubeznik on his 60th birthday \bigskip
\end{flushright}

\begin{abstract}
During the conference held in 2017 in Minneapolis for his 60th birthday, Gennady Lyubeznik proposed the following problem:
Find a complete local domain $R$ and an element $x\in R$ having three minimal primes $\pp_1,\pp_2$ and $\pp_3$ such that $\pp_i+\pp_j$
has height 2 for all $ i\neq j$ and $\pp_1+\pp_2+\pp_3$ has height 4. In this note this beautiful problem will be discussed,
and will be shown that the principle leading to the fact that such a ring cannot exist is false. The specific problem, though, remains open.
\end{abstract}

\section{Introduction}

Given an element $x$ in a domain $R$, at what extent does the ring $R/xR$ remember that $R$ was a domain? 
More precisely, if $R$ is a complete local domain, and $x\in R\setminus \mm$, does the punctured spectrum $\Spec^o (R/xR)$
have any special feature?
The wish to know the answer to this question
led Gennady Lyubeznik to pose the problem mentioned in the abstract. 

The purpose of this note is to convince the reader that $\Spec^o (R/xR)$ does not have special features apart from the obvious ones and from the fact that, if $R$ has dimension at least 3,
it must be connected by the Grothendieck's connectedness theorem \cite[Expos\'{e} XIII, Th\'{e}or\`{e}me 2.1]{SGA2}. What we mean by ``has not special features'' will be made precise in Problem \ref{p:final}: we are not able to solve it, as well as we are not 
able to solve Lyubeznik's problem, however we will exhibit an example of a 4-dimensional $\N$-graded domain $R$ and a linear form $x\in R$ such that 
$\Spec^o (R/xR)$ has an unexpected property (Example \ref{ex:main}), contrary to the prediction of Lyubeznik. On the other hand, we show that his prediction holds in many cases (Theorem \ref{t:pos}).

\section{The Lyubeznik complex of a $\N$-graded $\kk$-algebra}

Let $\Bbbk$ be a separably closed field and $R=\bigoplus_{i\in\N}R_i$ be a $\N$-graded $\kk$-algebra 
(i.e. a $\N$-graded Noetherian ring such that $R_0=\kk$). Denote by $\mm=\bigoplus_{i>0}R_i$.
Notice that the minimal prime ideals of $R$ are homogeneous, so we have
\[\pp\in\Min(R) \implies \pp\subseteq \mm.\]
Therefore, in this situation we can define the simplicial complex $\Lyu(R)$ as follows:
\begin{enumerate}
\item The vertex set of $\Lyu(R)$ is $\{1,\ldots ,s\}=[s]$, where $\Min(R)=\{\pp_1, \ldots, \pp_s\}$.
\item $\{i_1, \ldots, i_k\}$ is a face of $\Lyu(R)$ if and only if $\sqrt{\pp_{i_1} + \ldots + \pp_{i_k}} \neq \mm$.
\end{enumerate}
We will refer to the simplicial complex $\Lyu(R)$ as the \emph{Lyubeznik complex} of $R$. 

\begin{remark}
The Lyubeznik complex can be defined in the same way for any local ring. If $R$ is a $\N$-graded $\kk$-algebra  with homogeneous maximal ideal $\mm$,
then
\[\Lyu(R)=\Lyu(\widehat{R^{\mm}}).\]
(here $\widehat{R^{\mm}}$ denotes the $\mm$-adic completion of $R$). Furthermore, $R$ is a domain if and only if $\widehat{R^{\mm}}$ is a domain: both these facts follow, for example, from \cite[Lemma 1.15]{Va1}. This justifies why we consider $\N$-graded $\kk$-algebras: to provide examples, which is our purpose, that is enough.
\end{remark}

\begin{remark}
In what follows, we will state some results proved in the standard graded case for $\N$-graded $\kk$-algebras. This is because,
for the problems we will deal with, it is often possible to reduce to the standard graded case: If $R$ is a $\N$-graded $\kk$-algebra,
then there exists a high enough natural number $m$ such that the Veronese ring $R^{(m)}=\bigoplus_{i\in \N}R_{im}$ is standard graded.
Furthermore, 
\[\Lyu(R^{(m)})=\Lyu(R).\]
Notice also that $\depth R^{(m)}\geq \depth R$ for all $m\geq 1$ by \cite[Theorem 3.1.1]{GW}.
\end{remark}

\begin{example1}\label{ex:SR}
Let $S=\kk[X_1,\ldots ,X_n]$ be the polynomial ring in $n$ variables over a field $\kk$. 
Given a finite simplicial complex $\Delta$ on $[n]$, let
\[I_{\Delta}=\left(\prod_{i\in\sigma}X_i:\sigma \in 2^{[n]}\setminus \Delta\right)\subseteq S\]
the Stanley-Reisner ideal of $\Delta$, and $\kk[\Delta]=S/I_{\Delta}$ the Stanley-Reisner ring of $\Delta$.
Denoting by $\F(\D)$ the set of facets of $\D$ and by $x_i=\overline{X_i}\in\kk[\Delta]$, we have
\[\Min(\kk[\D])=\{(x_i:i\in[n]\setminus \sigma):\sigma\in \F(\D)\}.\]
In this case, so, $\Lyu(\kk[\D])$ is the simplicial complex with $\F(\D)$ as vertex set and such that
\[\mathcal{A}\subseteq \F(\D) \mbox{ \ is a face of $\Lyu(\kk[\D])$ \ }\iff \bigcap_{\sigma\in\mathcal{A}}\sigma\neq \emptyset.\]
So, $\Lyu(\kk[\D])$ is the nerve of $\D$ with respect to the covering given by its facets. The Borsuk's nerve lemma \cite[Theorem 10.6]{Bj} therefore implies that 
$\Lyu(\kk[\D])$ and $\D$ are homotopically equivalent. In particular,
\[\widetilde{H_i}(\Lyu(\kk[\D]);\kk)\cong \widetilde{H_i}(\D;\kk) \ \ \ \forall \ i\in\N. \]
Hence, by Hochster's formula \cite[Theorem 13.13]{MS}, we infer that, if $R$ is a Stanley-Reisner ring over $\kk$ such that $\depth R\geq k$, then
\[\widetilde{H_0}(\Lyu(R);\kk)=\widetilde{H_1}(\Lyu(R);\kk)=\ldots = \widetilde{H_{k-2}}(\Lyu(R);\kk)=0.\]

\end{example1}

It is not known whether the same vanishing of the above example holds for any $\N$-graded $\kk$-algebra.  What is known so far is:

\begin{theorem}[Hartshorne, 1962, Propositions 1.1 and 2.1 in \cite{Ha}]
If $R$ is a $\N$-graded $\kk$-algebra such that $\depth R\geq 2$, then $\widetilde{H_0}(\Lyu(R);\kk)=0$.
\end{theorem}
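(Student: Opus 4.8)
The plan is to recast the conclusion as the connectedness of the punctured spectrum and then obtain it from the vanishing $H^0_{\mm}(R)=H^1_{\mm}(R)=0$ forced by the depth hypothesis. First I would dispose of the combinatorics: for any nonempty simplicial complex $X$ one has $\widetilde{H}_0(X;\kk)=0$ exactly when $X$ is connected, and $X$ is connected exactly when its $1$-skeleton is, so it suffices to show that the graph on $\Min(R)=\{\pp_1,\dots,\pp_s\}$ in which $\{i,j\}$ is an edge precisely when $\sqrt{\pp_i+\pp_j}\neq\mm$ is connected. Since $\depth R\geq 2$ forces $\dim R\geq 2$, we have $\mm\notin\operatorname{Ass}(R)\supseteq\Min(R)$; hence each $\pp_i\subsetneq\mm$, so $V(\pp_i)\setminus\{\mm\}$ is a nonempty closed subset of $\Spec^o R:=\Spec R\setminus\{\mm\}$, irreducible (being the closure of its generic point $\pp_i$) and thus connected, and these sets cover $\Spec^o R$. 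Because $\mm\in V(\pp_i+\pp_j)$ always, two members of this finite closed cover meet if and only if $\sqrt{\pp_i+\pp_j}\neq\mm$, so the intersection graph of the cover is exactly the graph above; and a finite cover of a space by nonempty connected closed sets has connected union if and only if its intersection graph is connected. (This is the nerve-type reasoning of Example~\ref{ex:SR}, carried out only at the level of $\pi_0$.) It therefore suffices to prove that $\Spec^o R$ is connected.

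For that I would use the standard exact sequence relating cohomology with support in $\{\mm\}$ to the cohomology of $U:=\Spec^o R$. Since $\Spec R$ is affine one has $H^{i}(\Spec R,\widetilde R)=0$ for $i>0$, so the long exact sequence of the pair $(\Spec R,U)$ collapses to
\[0\longrightarrow H^0_{\mm}(R)\longrightarrow R\longrightarrow \Gamma(U,\O_U)\longrightarrow H^1_{\mm}(R)\longrightarrow 0 .\]
By hypothesis $H^0_{\mm}(R)=H^1_{\mm}(R)=0$, so the middle arrow is an isomorphism $R\cong\Gamma(U,\O_U)$. If $U$ were disconnected, write $U=U_1\sqcup U_2$ with $U_1,U_2$ nonempty and open (hence closed) in $U$; then $\Gamma(U,\O_U)\cong\Gamma(U_1,\O)\times\Gamma(U_2,\O)$ is a product of two nonzero rings (a nonempty scheme has $1\neq 0$ among its global sections), so it contains the idempotent $(1,0)\notin\{0,1\}$. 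Transporting this along the isomorphism would equip $R$ with a nontrivial idempotent, which is impossible for an $\N$-graded $\kk$-algebra: expanding an idempotent into graded components and comparing degrees shows its degree-$0$ part is $0$ or $1$ and all higher parts then vanish. Hence $U$ is connected, and the argument is complete.

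The only step that is not formal bookkeeping is the displayed four-term exact sequence — equivalently, the vanishing of $H^{i}(\Spec R,\widetilde R)$ for $i>0$ on the affine scheme $\Spec R$ together with the local cohomology sequence of the pair $(\Spec R,U)$ — so that is where I expect the care to be needed. I would also remark, without pursuing it, that the more naive route through a Mayer--Vietoris sequence $0\to R/\sqrt{0}\to R/I\oplus R/J\to R/(I+J)\to 0$ for radical ideals $I,J$ cutting out a hypothetical disconnection of $\Spec^o R$ yields only $\depth(R/\sqrt{0})\leq 1$, which does not visibly contradict $\depth R\geq 2$; the global-sections-and-idempotent argument above avoids that.
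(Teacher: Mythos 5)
Your argument is correct, and there is nothing to repair: the reduction of $\widetilde{H}_0(\Lyu(R);\kk)=0$ to connectedness of the $1$-skeleton, the identification of that graph with the intersection graph of the closed cover $\{V(\pp_i)\setminus\{\mm\}\}$ of $\Spec^o R$ (using that $\mm$ is the unique prime containing $\mm$, since $R_0=\kk$), and the four-term sequence $0\to H^0_{\mm}(R)\to R\to\Gamma(U,\O_U)\to H^1_{\mm}(R)\to 0$ followed by the no-nontrivial-idempotents observation for an $\N$-graded $\kk$-algebra are all sound, including the small degree-by-degree check for idempotents in any characteristic. Note that the paper itself offers no proof of this statement: it is quoted from Hartshorne's 1962 paper (Propositions 1.1 and 2.1), and your two steps reconstruct exactly those two ingredients — the equivalence between connectedness of the punctured spectrum and connectedness of the graph on the minimal components, and the implication $\depth\geq 2\Rightarrow\Spec^o R$ connected via $R\cong\Gamma(U,\O_U)$ — so your proof is essentially the classical argument of the cited source, transported to the graded setting. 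Your closing remark is also apt: the naive Mayer--Vietoris depth count on $0\to R/\sqrt{0}\to R/I\oplus R/J\to R/(I+J)\to 0$ only constrains $\depth R/\sqrt{0}$, which is why the global-sections/idempotent route (equivalently, a Mayer--Vietoris in local cohomology with supports) is the one that actually closes the argument.
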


\begin{theorem}[Katzman, Lyubeznik, Zhang, 2015, Theorem 1.3 in \cite{KLZ}]
If $R$ is a $\N$-graded $\kk$-algebra such that $\depth R\geq 3$, then $\widetilde{H_0}(\Lyu(R);\kk)=\widetilde{H_1}(\Lyu(R);\kk)=0$.
\end{theorem}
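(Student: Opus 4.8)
The plan is to run a Mayer--Vietoris spectral sequence tying the combinatorics of $\Lyu(R)$ to the vanishing of low local cohomology of $R$. First some reductions. The assertion $\widetilde{H_0}(\Lyu(R);\kk)=0$ just says $\Lyu(R)$ is connected, which is already Hartshorne's theorem above (where only $\depth R\geq 2$ is used), so the whole content is $\widetilde{H_1}(\Lyu(R);\kk)=0$, equivalently $\widetilde{H}^1(\Lyu(R);\kk)=0$ since $\kk$ is a field. By the Veronese reduction recalled before Example~\ref{ex:SR} we may take $R$ standard graded. Write $\Min(R)=\{\pp_1,\ldots,\pp_s\}$ and, for $\emptyset\neq\sigma\subseteq[s]$, put $\pp_\sigma=\sqrt{\sum_{i\in\sigma}\pp_i}$, so that $\sigma$ is a face of $\Lyu(R)$ exactly when $\pp_\sigma\neq\mm$, equivalently when the reduced local ring $R/\pp_\sigma$ has positive dimension (hence positive depth).

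Next, form the Mayer--Vietoris (\v{C}ech) cochain complex $\mathcal{C}^\bullet$ of the closed cover $\Spec R=\bigcup_i V(\pp_i)$: in cohomological degree $p$ it is $\mathcal{C}^p=\bigoplus_{|\sigma|=p+1}R/\pp_\sigma$, with the alternating restriction differential. Applying the local cohomology functor $H^\bullet_\mm(-)$ and reading off the resulting hypercohomology $\mathbb{H}^\bullet$ from the two spectral sequences of the associated double complex, the $q=0$ row of the first spectral sequence is $\bigoplus_{|\sigma|=p+1}H^0_\mm(R/\pp_\sigma)$ in cohomological degree $p$, with the \v{C}ech differential; since each $R/\pp_\sigma$ is a reduced local ring, $H^0_\mm(R/\pp_\sigma)$ is $\kk$ when $\pp_\sigma=\mm$ (i.e. $\sigma\notin\Lyu(R)$) and $0$ otherwise, so this row is exactly the subcomplex of the simplicial cochain complex of the full simplex on $[s]$ spanned by the non-faces of $\Lyu(R)$. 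Since the full simplex is contractible, this row has cohomology $\widetilde{H}^{p-1}(\Lyu(R);\kk)$ in cohomological degree $p$; in particular $\widetilde{H}^1(\Lyu(R);\kk)$ occupies the spot $(2,0)$ of the $E_2$-page. The only differential hitting that spot comes from $(0,1)$ --- built out of the modules $H^1_\mm(R/\pp_i)$ --- and no differential leaves it, so the cokernel of that differential into $\widetilde{H}^1(\Lyu(R);\kk)$ embeds into $\mathbb{H}^2$.

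It therefore remains to show $\mathbb{H}^2=0$, and that the $(0,1)$-differential vanishes, using $\depth R\geq 3$; this goes through the second spectral sequence, whose $E_2$-page is the local cohomology $H^\bullet_\mm$ of the \v{C}ech-cohomology modules of $\mathcal{C}^\bullet$. These modules are mild: the zeroth one contains the image of $R$ with quotient supported on the loci $V(\pp_i+\pp_j)$, $i\neq j$, where two or more components meet; the higher ones are supported there as well; and if $R$ is not reduced one must also carry along the nilradical $\sqrt{0}$. As $\depth R\geq 3$ forces $H^0_\mm(R)=H^1_\mm(R)=H^2_\mm(R)=0$, one is reduced to checking that the low local cohomology ($H^{\leq 2}_\mm$) of these ``correction terms'', together with the modules $H^1_\mm(R/\pp_i)$, contributes nothing in the relevant positions. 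This is the crux, and I expect it to be the genuinely delicate part: a priori neither the failure of inclusion--exclusion for the ideals $\pp_i+\pp_j$ nor the modules $H^1_\mm(R/\pp_i)$ vanish, so one probably needs an extra ingredient --- a reduction to the case where $R$, or each $R/\pp_i$, satisfies Serre's condition $(S_2)$; or an inductive use of Hartshorne's connectedness theorem (and of the $\widetilde{H_0}$-vanishing just invoked) applied to the quotients $R/\pp_\sigma$; or a sharper analysis in place of the crude dimension bounds on the correction terms. The hypothesis $\depth R\geq 3$, by contrast, is used only to kill $H^0_\mm(R)$, $H^1_\mm(R)$ and $H^2_\mm(R)$.
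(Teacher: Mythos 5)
First, a point of order: the paper does not prove this statement at all --- it is quoted as Theorem 1.3 of \cite{KLZ} --- so your attempt has to stand on its own as a proof of the Katzman--Lyubeznik--Zhang theorem, and as written it does not. The formal set-up is fine: the Veronese reduction, the Mayer--Vietoris double complex, and the identification of $\widetilde{H}^1(\Lyu(R);\kk)$ with $E_2^{2,0}$ of the first spectral sequence (the $q=0$ row being the relative cochain complex of the simplex modulo $\Lyu(R)$) are all correct. But the conclusion then requires two things you never establish: the vanishing of the relevant graded piece of $\mathbb{H}^2$, and the vanishing of the differential $d_2\colon E_2^{0,1}\to E_2^{2,0}$; you yourself defer both as ``the crux''. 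They are indeed the crux, and the hypotheses give no mechanism for them. The assumption $\depth R\geq 3$ only kills $H^0_\mm(R)$, $H^1_\mm(R)$, $H^2_\mm(R)$, but $R$ is not a term of your complex $\mathcal{C}^\bullet$: in the second spectral sequence $\mathbb{H}^2$ receives $H^2_\mm(\mathcal{H}^0)$, $H^1_\mm(\mathcal{H}^1)$ and $H^0_\mm(\mathcal{H}^2)$, where $\mathcal{H}^0$ differs from $R/\sqrt{0}$ (not from $R$, and $\depth R\geq 3$ does not descend to $R/\sqrt{0}$) by a module supported on the pairwise intersections $V(\pp_i+\pp_j)$, and $\mathcal{H}^{\geq 1}$ are supported there as well; nothing bounds the low local cohomology of these correction modules, nor the modules $H^1_\mm(R/\pp_i)$ that feed $E_2^{0,1}$. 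So the argument reduces the theorem to statements that are no easier than the theorem itself.

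Nor should you expect to close this by sharper bookkeeping within the same formal framework: the result is genuinely deeper than Hartshorne's $\depth\geq 2$ connectedness theorem, which does admit such an elementary Mayer--Vietoris proof. The proof in \cite{KLZ} is of a different nature: it controls local cohomology over a regular presentation of (the completion of) $R$ and invokes vanishing theorems there --- in particular the second vanishing theorem, which is exactly where the standing assumption that $\kk$ is separably closed enters. Your sketch never uses that assumption, which is a telltale sign that the essential input is missing. The usable bridge, if you want one, is the cohomological-dimension criterion quoted in the proof of Theorem \ref{t:pos} (\cite[Theorem 2.3]{KLZ}): your spectral sequence correctly locates the obstruction, but supplies nothing that kills it.
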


Another interesting problem concerns the behavior of the Lyubeznik complex under hyperplane sections. For example, in the case in which $R$ is a domain, then $\Lyu(R)$ is just a point, but there can be some graded element $x\in R$ such that $\Lyu(R/xR)$ is quite complicated. Not arbitrarily complicated however, indeed:

\begin{theorem}[Bertini, $\sim 1920$, Expos\'{e} XIII, Th\'{e}or\`{e}me 2.1 in \cite{SGA2}]
If $R$ is a $\N$-graded domain of dimension at least $3$ and $x\in R_i$ for some $i>0$, then $\widetilde{H_0}(\Lyu(R/xR);\kk)=0$.
\end{theorem}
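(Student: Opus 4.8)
The plan is to reduce to a statement about connectedness of a punctured spectrum and then prove that statement by a Mayer--Vietoris argument built on the Hartshorne--Lichtenbaum vanishing theorem.

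\emph{Step 1: reduction.} I would set $A=\widehat{R^{\mm}}$. By the first remark, $A$ is a complete local domain with $d:=\dim A=\dim R\geq 3$, and, $R/xR$ being a finite $R$-module, completion commutes with passing to the quotient, so $\Lyu(R/xR)=\Lyu(A/xA)$. If $x=0$ then $A/xA=A$ is a domain and $\Lyu(A/xA)$ is a single point, so I may assume $x\neq 0$, and then $x$ is a nonzerodivisor on the domain $A$. A nonempty simplicial complex has vanishing reduced $0$-th homology exactly when it is connected; and, just as in Example \ref{ex:SR}, $\Lyu(A/xA)$ is connected iff its $1$-skeleton --- the graph whose vertices are the irreducible components of $\Spec^o(A/xA):=\Spec(A/xA)\setminus\{\mm\}$, with an edge for each pair of components that meet inside $\Spec^o(A/xA)$ --- is connected, which in turn holds iff $\Spec^o(A/xA)$ is connected (each component being irreducible, hence connected). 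So the task reduces to showing that $\Spec^o(A/xA)$ is connected.

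\emph{Step 2: the contradiction.} Suppose it were disconnected. The minimal primes of $A/xA$ are the primes of $A$ minimal over $xA$; each such prime $\pp$ has height $1$ by Krull's principal ideal theorem, hence $\dim A/\pp=d-1\geq 2$, since a complete local domain is catenary and equidimensional. A disconnection of $\Spec^o(A/xA)$ partitions these primes into two nonempty families $\mathcal{P}_1$ and $\mathcal{P}_2$ with $\sqrt{\pp+\qq}=\mm$ whenever $\pp\in\mathcal{P}_1$, $\qq\in\mathcal{P}_2$. Put $I=\bigcap_{\pp\in\mathcal{P}_1}\pp$ and $J=\bigcap_{\qq\in\mathcal{P}_2}\qq$. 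Then $V(I)\cup V(J)=V(xA)$, so $\sqrt{I\cap J}=\sqrt{xA}$; $V(I)\cap V(J)=\{\mm\}$, so $\sqrt{I+J}=\mm$; and $\dim A/I=\dim A/J=d-1\geq 1$. I would now invoke the Mayer--Vietoris sequence for local cohomology,
\[\cdots \longrightarrow H^{d-1}_{I\cap J}(A)\longrightarrow H^{d}_{I+J}(A)\longrightarrow H^{d}_{I}(A)\oplus H^{d}_{J}(A)\longrightarrow \cdots,\]
together with the fact that local cohomology depends only on the radical: the left-hand term equals $H^{d-1}_{xA}(A)=0$ because $xA$ is principal and $d-1\geq 2$; the middle term equals $H^{d}_{\mm}(A)$; and the right-hand terms vanish by the Hartshorne--Lichtenbaum vanishing theorem applied to the complete local domain $A$, since $\dim A/I$ and $\dim A/J$ are positive. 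Exactness then forces $H^{d}_{\mm}(A)=0$, contradicting Grothendieck's non-vanishing theorem ($\dim A=d$). So $\Spec^o(A/xA)$ is connected.

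\emph{What the difficulty is, and where the hypothesis bites.} The only non-formal ingredient is the Hartshorne--Lichtenbaum vanishing theorem, and that is exactly where the hypothesis that $R$ --- equivalently, that $A$ --- is a domain enters decisively; the conclusion genuinely fails for reducible $R$ (glue two planes at a common point and cut by a general linear form). When $R$ is Cohen--Macaulay, or just satisfies Serre's condition $S_3$, the whole argument collapses: then $R/xR$ has depth at least $2$, and $\widetilde{H_0}(\Lyu(R/xR);\kk)=0$ follows at once from Hartshorne's theorem quoted above. Of course the argument above is merely a proof of Grothendieck's connectedness theorem \cite[Expos\'{e} XIII, Th\'{e}or\`{e}me 2.1]{SGA2}, which one could simply cite.
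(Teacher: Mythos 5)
Your argument is correct. Note, though, that the paper offers no proof of this statement at all: it is quoted as Bertini/Grothendieck's connectedness theorem with a citation to \cite[Expos\'{e} XIII, Th\'{e}or\`{e}me 2.1]{SGA2}, so there is no ``paper proof'' to match. What you have written is the standard modern proof of exactly that cited result in the case of a principal ideal: pass to $A=\widehat{R^{\mm}}$ (legitimate here by the paper's first Remark, which guarantees both that $A$ is a complete local domain and that $\Lyu(R/xR)=\Lyu(A/xA)$), identify vanishing of $\widetilde{H_0}$ of the Lyubeznik complex with connectedness of $\Spec^o(A/xA)$ via the dual graph of its irreducible components, and then rule out a disconnection by the Mayer--Vietoris sequence: $H^{d-1}_{I\cap J}(A)=H^{d-1}_{xA}(A)=0$ since $xA$ is principal and $d-1\geq 2$, $H^d_I(A)=H^d_J(A)=0$ by Hartshorne--Lichtenbaum (this is where the domain and completeness hypotheses enter), and $H^d_{\mm}(A)\neq 0$ by Grothendieck nonvanishing. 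All the auxiliary points you use are in order: the minimal primes of $xA$ have height $1$ and $\dim A/\pp=d-1\geq 2$ because a complete local domain is catenary, $V(I)\cup V(J)=V(xA)$ and $\sqrt{I+J}=\mm$ for a disconnection, and local cohomology only depends on radicals. Your route is also consonant with the paper's own toolkit: inside the proof of Theorem \ref{t:pos} the author invokes precisely this circle of ideas (connectedness of the punctured spectrum of $\widehat{R_{\pp}}/x\widehat{R_{\pp}}$ via \cite[Proposition 1.13]{Va1} and the vanishing theorem of \cite[Theorem 2.9]{HL}), so your proof makes explicit what the paper delegates to \cite{SGA2}; the only cost is that you reprove a theorem one could simply cite, as you yourself observe.
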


(In the previous theorem, and in the rest of the paper, by $\N$-graded domain we mean $\N$-graded $\kk$-algebra which is a domain). In view of the above discussion about the depth,
is hard to resist from asking:

\begin{question}\label{q:main}
Let $R$ be a $\N$-graded domain of dimension at least $4$ and let $x\in R$ be a graded element of positive degree. Is it true that $\widetilde{H_0}(\Lyu(R/xR);\kk)=\widetilde{H_1}(\Lyu(R/xR);\kk)=0$?
\end{question}

With the purpose of inquiring on the above question Lyubeznik, during the conference held in 2017 in Minneapolis for his 60th birthday, proposed the following problem:

\begin{problem}\label{p:lyu}
Find a $\N$-graded $4$-dimensional domain $R$ and a graded element $x\in R$ of positive degree such that 
\[\mathrm{Min}(xR)=\{\pp_1,\pp_2,\pp_3\}, \ \ \height(\pp_i+\pp_j)=2 \ \forall \ i\neq j, \ \ \height(\pp_1+\pp_2+\pp_3)=4.\]
\end{problem}

If $R$ and $x\in R$ were like in \ref{p:lyu}, then $\Lyu(R/xR)$ would be the (empty) triangle $\langle \{1,2\},\{2,3\},\{1,3\}\rangle$. In particular $\widetilde{H_1}(\Lyu(R/xR);\kk)\cong \kk$, so $R$ and $x\in R$ would provide a negative answer to Question \ref{q:main}. In this note we will give, in Example \ref{ex:main}, a negative answer to \ref{q:main}, but without solving \ref{p:lyu}. Before moving to the proof of \ref{ex:main}, however, I would like to briefly discuss on the existence of rings like in  \ref{p:lyu}. The same proof of \cite[Theorem 2.4]{BDV} gives the following:

\begin{theorem}\label{t:bmm}
Let $d,\delta$ be two natural numbers such that $d> \delta+1$, and assume $\mathrm{char}(\kk)=0$. For all $\delta$-dimensional simplicial complex $\Delta$, there exists a $d$-dimensional domain $R$ standard graded over $\kk$, and a homogeneous ideal $I\subseteq R$ of height 1 such that:
\begin{compactitem}
\item[{\rm (i)}] $\Lyu(R/I)=\Delta$;
\item[{\rm (ii)}] If $\Min(I)=\{\pp_1, \ldots, \pp_s\}$ and $\{i_1, \ldots, i_k\}\in \Delta$, then $\height(\pp_{i_1}+\ldots +\pp_{i_k})=k$.
\item[{\rm (iii)}] If $\widetilde{H_0}(\Delta;\kk)=0$, everything can be chosen so that $\depth R/I\geq 2$;
\item[{\rm (iv)}] If $\widetilde{H_0}(\Delta;\kk)=\widetilde{H_1}(\Delta;\kk)=0$, everything can be chosen so that $\depth R/I\geq 3$;
\end{compactitem}
\end{theorem}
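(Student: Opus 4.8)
The plan, following \cite[Theorem~2.4]{BDV}, is to realize the combinatorics of $\Delta$ \emph{projective-geometrically}, as the nerve of a family of prime divisors on an irreducible variety, rather than by monomial algebra. Note first that since all vertices of $\Delta$ are faces, (ii) forces every $\pp_i$ to have height $1$; so $R$ cannot be taken to be a polynomial ring (three principal primes sum to height at most $3$, whereas non-faces must give $\mm$), and one checks that $R/I$ cannot be a Stanley--Reisner ring either (that would make it $\kk[\Gamma]$ with $\Gamma$ pure of dimension $d-2$, and then (ii) imposes the intersection-cardinality constraints $|\bigcap_{i\in A}\sigma_i|=d-|A|$ on the facets $\sigma_i$, which are already infeasible for the boundary of a triangle when $d\ge 4$). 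So I would instead look for a standard graded domain $R$ with $X=\Proj R$ irreducible of dimension $d-1$, and codimension-$1$ subvarieties $Y_1,\dots,Y_s\subseteq X$ with defining primes $\pp_1,\dots,\pp_s$, such that for every $A\subseteq[s]$ the intersection $\bigcap_{i\in A}Y_i$ is empty when $A\notin\Delta$ and has codimension exactly $|A|$ in $X$ when $A\in\Delta$; then one sets $I=\bigcap_i\pp_i$, which has height $1$ with $\Min(I)=\{\pp_1,\dots,\pp_s\}$. From the viewpoint of Example~\ref{ex:SR}, $\Lyu(R/I)$ is precisely the nerve of the covering $\{Y_i\}$ of $\bigcup_i Y_i$, so (i) and (ii) are built into these requirements (the latter after a routine height computation using that $R$ is catenary).

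To produce $X$ and the $Y_i$ I would take $X$ to be a sufficiently flexible rational variety attached to $\Delta$ — for instance obtained from a product of projective spaces by a sequence of blow-ups, or realized as a projective toric variety — of dimension exactly $d-1$, and realize the vertices of $\Delta$ by base-point-free divisor classes on $X$ chosen so that a subset of these classes has a nonzero intersection product exactly when the corresponding vertex set is a face of $\Delta$. The hypothesis $d>\delta+1$ is used here: it gives $\dim X=d-1\geq\delta+1$, so a $\delta$-dimensional facet of $\Delta$ can still correspond to a nonempty intersection of the correct codimension $\delta+1$. One then takes each $Y_i$ to be a \emph{general} member of the corresponding linear system; this is where $\mathrm{char}(\kk)=0$ enters, via Bertini's theorems and Kleiman transversality: general members intersect properly, and the schemes $\bigcap_{i\in A}Y_i$ as well as $\bigcup_i Y_i$ are reduced, with $\codim_X\bigcap_{i\in A}Y_i=|A|$ when $A\in\Delta$ and $\bigcap_{i\in A}Y_i=\emptyset$ otherwise. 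Taking $R$ to be the homogeneous coordinate ring of $X$ in a projectively normal embedding then gives (i) and (ii).

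For (iii) and (iv) one must control $\depth(R/I)$, i.e.\ the vanishing of $H^j_{\mm}(R/I)$, by a ``geometric Hochster formula''. The Mayer--Vietoris complex of the covering $\{Y_i\}$ of $\bigcup_i Y_i$ produces a spectral sequence relating $H^j_{\mm}(R/I)$ to the reduced simplicial cohomology $\widetilde{H}^{\bullet}(\Delta;\kk)$ and the local cohomologies of the pieces $\O_{Y_{i_1}\cap\cdots\cap Y_{i_k}}$. Choosing the divisor classes so that all these intersections are rational and arithmetically Cohen--Macaulay (again using $\mathrm{char}(\kk)=0$ and the relevant vanishing theorems), the contributions of the pieces vanish in low degree, and the spectral sequence yields $H^1_{\mm}(R/I)=0$ once $\widetilde{H_0}(\Delta;\kk)=0$, and $H^2_{\mm}(R/I)=0$ once in addition $\widetilde{H_1}(\Delta;\kk)=0$; together with $H^0_{\mm}(R/I)=0$ (valid since $R/I$ is reduced and equidimensional of positive dimension), this gives $\depth(R/I)\geq 2$, respectively $\geq 3$.

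The hard part is the construction of the second paragraph: for an \emph{arbitrary} simplicial complex $\Delta$, producing an irreducible variety of the exact dimension $d-1$ carrying divisor classes whose nerve is $\Delta$ on the nose — not merely up to homotopy, as the bare nerve lemma would give — with the codimension bookkeeping of (ii) and with intersections nice enough for (iii) and (iv). A product of projective spaces alone realizes only nerves that are transversal matroids, so the blow-up (or toric) flexibility is genuinely needed, and carrying it out compatibly with the Cohen--Macaulayness required for the depth estimates is the delicate point; everything downstream is the Mayer--Vietoris computation and the height bookkeeping indicated above.
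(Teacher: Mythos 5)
Your overall strategy --- realize $\Delta$ as the nerve of codimension-one subvarieties $Y_1,\dots,Y_s$ on an irreducible smooth rational $(d-1)$-fold, set $I=\bigcap_i\pp_i$ in the coordinate ring, and control $\depth R/I$ through a Mayer--Vietoris/nerve argument --- is indeed the spirit of what the paper does (the paper gives no self-contained proof but invokes the construction of \cite[Theorem 2.4]{BDV}, whose shape is visible in Example \ref{quasi}: coordinate hyperplanes in $\PP^3$ with the triple point blown up, $R=\kk[J_3]$, $I=(XYZ)\cap R$). But your write-up has a genuine gap exactly where the theorem has its content: the existence of the configuration. You explicitly defer ``the hard part,'' and the mechanism you propose in its place --- base-point-free divisor classes on some toric or blown-up variety whose top-dimensional intersection products vanish precisely on the non-faces of an \emph{arbitrary} $\Delta$, followed by Kleiman/Bertini applied to general members --- is an unproven existence claim, not a construction; nothing in the sketch shows such classes exist for every $\Delta$, and verifying this is not easier than the theorem itself. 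Similarly, parts (iii)--(iv) in your version rest on the assertion that all the intersections $Y_{i_1}\cap\dots\cap Y_{i_k}$ can simultaneously be made arithmetically Cohen--Macaulay in the chosen embedding, which you flag as ``delicate'' but never establish; without it the spectral-sequence step does not give the vanishing of $H^1_{\mm}(R/I)$ and $H^2_{\mm}(R/I)$.

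For comparison, the cited construction avoids these issues by being completely explicit: since $\dim\Delta=\delta\le d-2$, take $s$ hyperplanes in general position in $\PP^{d-1}$, one per vertex; every face $\{i_1,\dots,i_k\}$ of $\Delta$ has $k\le\delta+1\le d-1$, so the corresponding stratum is a nonempty linear space of codimension $k$, while one then blows up the (linear) strata indexed by the minimal non-faces, after which the strict transforms meet exactly according to $\Delta$ and with the correct codimensions --- no general members, no numerical vanishing pattern to engineer, and the intersection strata are blow-ups of linear spaces, which is what makes the depth statements (iii)--(iv) tractable. Your Example \ref{quasi}-style height bookkeeping and the reduction of (i)--(ii) to the nerve are fine, and the first paragraph's aside about $R/I$ not being Stanley--Reisner is harmless but not needed; the missing piece is the actual realization step, which is the heart of the proof.
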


In particular, by choosing $d=4$, $\delta=1$ and $\Delta$ the (empty) triangle, we can find a $4$-dimensional domain standard graded over $\kk$ and a height 1 ideal $I\subseteq R$ such that 
\[\mathrm{Min}(I)=\{\pp_1,\pp_2,\pp_3\}, \ \ \height(\pp_i+\pp_j)=2 \ \forall \ i\neq j, \ \ \height(\pp_1+\pp_2+\pp_3)=4.\]
If $I$ were principal up to radical we would have solved Problem \ref{p:lyu}. Let us see how the construction works  in this particular case:

\begin{example1}\label{quasi}
Consider $J=(X,Y,Z), \ H=(XYZ) \subseteq S=\kk[X,Y,Z,W]$, and 
\[R=\kk[J_3]\subseteq S.\] 
The ideal $I=H\cap R$ has three minimal primes, 
\[\pp_1=(X)\cap R, \ \pp_2=(Y)\cap R, \ \pp_3=(Z)\cap R,\] 
satisfying the properties 
\[\height(\pp_i+\pp_j)=2 \ \forall \ i\neq j, \ \ \height(\pp_1+\pp_2+\pp_3)=4.\]
However this example does not solve Problem \ref{p:lyu}, since $I$ is not principal: in fact, $I=(XYZ, \ XYZW^3, \ XYZW^6)$. (Up to radical $I$ is generated by $XYZ$ and $XYZW^6$, but one can argue, and this will also follow by the next theorem, that $I$ is not principal even up to radical).
\end{example1}

\begin{theorem}\label{t:pos}
Let $R$ be a $\N$-graded $\kk$-algebra, and assume that ${\rm char}(\kk)=0$.
If $X=\Proj R$ is connected, satisfies $(S_3)$, is Du Bois in codimension $2$, and $H^1(X,\O_X)= 0$, then $\widetilde{H_1}(\Lyu(R/xR);\kk)=0$ \
$\forall \ x\in R_i$ with $i>0$. In particular, if $X$ is a smooth variety and $H^1(X,\O_X)= 0$, then $\widetilde{H_1}(\Lyu(R/xR);\kk)=0$ \ $\forall \ x\in R_i$ with $i>0$.
\end{theorem}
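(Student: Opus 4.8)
The plan is to reduce the statement to a cohomological vanishing on the cone and then to a question about formal/algebraic de Rham (or Du Bois) cohomology of the pair $(X, Y)$ where $Y = \Proj(R/xR)$. First I would pass, via the Veronese trick mentioned in the second Remark, to the standard graded case, and then replace $x$ by its radical so that $Y = V_+(x)$ is reduced; write $Y_1, \ldots, Y_s$ for its irreducible components, which correspond to the vertices of $\Lyu(R/xR)$. The key translation is that $\widetilde{H_1}(\Lyu(R/xR);\kk) = 0$ is equivalent, via the Mayer–Vietoris spectral sequence for the closed covering $Y = \bigcup Y_i$ (the nerve of which is exactly $\Lyu(R/xR)$), to a statement relating $H^\bullet$ of $Y$ to the $H^\bullet$ of the intersections $Y_{i_1} \cap \cdots \cap Y_{i_k}$. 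Concretely, $\widetilde{H_1}(\Lyu(R/xR);\kk) = 0$ follows once one knows that the restriction maps control $H^0$ and $H^1$ of the structure sheaf (or of the Du Bois complex $\underline{\Omega}^0_Y$) of $Y$ and of its pairwise intersections — this is where the hypotheses $H^1(X,\O_X)=0$, $(S_3)$, and ``Du Bois in codimension $2$'' will be used.

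Next I would run the comparison between $Y$ and the ambient $X$. Since $x$ cuts out a Cartier (or at least codimension-one) subscheme, there is an exact sequence $0 \to \O_X(-m) \to \O_X \to \O_Y \to 0$ (after the Veronese normalization, $x$ has degree $1$). Taking cohomology and using $H^1(X,\O_X)=0$ gives information on $H^0(Y,\O_Y)$ and $H^1(Y,\O_Y)$ in terms of $H^i(X,\O_X(-m))$; the condition $(S_3)$ on $X$ forces these twisted cohomologies to behave well in low degrees (Serre vanishing-type control via depth $\ge 3$ of the cone at the irrelevant ideal). The Du Bois-in-codimension-$2$ hypothesis is what lets me replace the naive structure sheaves by the zeroth graded pieces of the Du Bois complexes on $Y$ and on the intersections, which is essential because $Y$ itself need not be $(S_2)$ or seminormal — only in codimension $2$ do I have the Du Bois property to guarantee that $\mathcal{H}^0(\underline{\Omega}^0_Y) = \O_{Y^{\mathrm{sn}}}$ and that the higher $\mathcal{H}^i$ vanish in the relevant range. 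Then the nerve lemma / Čech-to-derived spectral sequence for the Du Bois complexes of the $Y_i$ identifies the reduced simplicial cohomology $\widetilde{H^\bullet}(\Lyu(R/xR);\kk)$ with a subquotient of $H^\bullet(Y, \underline{\Omega}^0_Y)$, and the pieces in degrees $0$ and $1$ are killed by what we established.

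The last step is to assemble: combine the exact sequence computation ($H^1(X,\O_X) = 0$ plus $(S_3)$ giving vanishing of the obstruction group) with the Du Bois-in-codimension-$2$ identification of the nerve cohomology, to conclude $\widetilde{H_1}(\Lyu(R/xR);\kk) = 0$. For the ``in particular'' clause, when $X$ is smooth it is automatically Du Bois (everywhere, hence in codimension $2$) and Cohen–Macaulay (hence $(S_3)$), so the only remaining hypothesis is $H^1(X,\O_X)=0$, and the general statement applies verbatim.

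I expect the main obstacle to be the careful handling of the Du Bois-in-codimension-$2$ hypothesis on the intersections $Y_{i_1} \cap \cdots \cap Y_{i_k}$: the covering is by the components of a hypersurface section of $X$, so these intersections can be badly singular, and one needs that being Du Bois in codimension $2$ on $X$ (together with $x$ general enough — or, since $x$ is arbitrary, via a Bertini-type argument in characteristic $0$) propagates to enough control of the Du Bois complexes of the strata to make the nerve spectral sequence degenerate in the range $i \le 1$. Keeping track of the ``codimension $2$'' bookkeeping — i.e.\ that we only ever need the Du Bois property away from a locus of codimension $\ge 3$, which is invisible to $H^0$ and $H^1$ — is the delicate point, and is presumably where the $(S_3)$ condition enters to bound the bad locus.
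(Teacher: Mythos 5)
Your outline diverges from the paper's argument, and as it stands it has two genuine gaps. First, the step you lean on hardest is never actually proved: you need the vanishing of $H^1(Y,\O_Y)$ (equivalently $H^2_{\mm}(R/xR)_0=0$), and you propose to get it from the twist sequence $0\to\O_X(-m)\to\O_X\to\O_Y\to 0$ together with ``$(S_3)$ gives Serre-vanishing-type control'' of $H^i(X,\O_X(-m))$. That is not a proof: Serre vanishing concerns large positive twists, while what is needed here is a Kodaira-type vanishing for the specific negative twist, i.e.\ $H^1(X,\O_X(-m))=H^2(X,\O_X(-m))=0$, equivalently $H^2_{\mm}(R)_{<0}=H^3_{\mm}(R)_{<0}=0$. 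This is precisely where all three hypotheses (${\rm char}\,\kk=0$, $(S_3)$, Du Bois in codimension $2$) enter in the paper, via \cite[Proposition 5.5]{DMV}; combined with $H^2_{\mm}(R)_0=H^1(X,\O_X)=0$ and the long exact sequence of local cohomology this gives $H^1(Y,\O_Y)=0$. Second, you misplace the role of the Du Bois hypothesis: you want Du Bois-type control of $Y$ and of the intersections $Y_{i_1}\cap\cdots\cap Y_{i_k}$ (and you admit this is the obstacle, suggesting a Bertini-type argument — which is unavailable, since $x$ is an arbitrary graded element, not a general one). No such control of $Y$ or its strata is needed or used: the hypothesis is on $X$ only, and only through the vanishing just described. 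Relatedly, your nerve step is too loose: identifying $\widetilde{H}^1(\Lyu)$ as ``a subquotient'' of $\mathbb{H}^1(Y,\underline{\Omega}^0_Y)$ does not transfer vanishing; you would need an injection of $\widetilde{H}^1$ of the nerve into something you can kill, and with coherent (or Du Bois) coefficients the $q=0$ row of the Mayer--Vietoris spectral sequence is the cochain complex of the nerve only when all multi-intersections are connected, which you cannot assume.

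For comparison, the paper does not argue on $Y$ by a covering at all. After the same Veronese reduction, it writes $R=S/I$, sets $J=I+(f)$, and proves the cohomological dimension bound $\cd(S;J)\le n-3$, which yields the conclusion by \cite[Theorem 2.3]{KLZ}. The bound is obtained by combining: (i) $H^1(Y,\O_Y)=0$ as above, hence $H^1(Y;\Z)=0$ via the exponential sequence; (ii) the fact that $H^{n-2}_J(S)$ is supported at the irrelevant ideal, proved locally using $(S_3)$, the connectedness criterion \cite[Proposition 1.13]{Va1}, Huneke--Lyubeznik \cite[Theorem 2.9]{HL}, Hartshorne--Lichtenbaum and Grothendieck vanishing; and (iii) Ogus' theorem together with Hartshorne's de Rham comparison. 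A Hodge-theoretic argument on $Y$ in the spirit of your sketch (weight-zero part of $H^1(Y;\CC)$, surjectivity of $H^1(Y,\O_Y)\to\mathbb{H}^1(Y,\underline{\Omega}^0_Y)$) could likely be made to work, but it would still require the vanishing $H^1(Y,\O_Y)=0$, i.e.\ exactly the \cite{DMV} input you are missing.
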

\begin{proof}
First we notice that we can assume that $R$ is standard graded: in fact, take $m\in\N$ large enough so that $R^{(m)}$ is standard graded. We have that $\Proj R\cong \Proj R^{(m)}$ and $(\Proj R/xR)_{{\rm red}}\cong(\Proj R^{(m)}/x^mR^{(m)})_{{\rm red}}$. In particular
\[\Lyu(R/xR)=\Lyu(R^{(m)}/x^mR^{(m)}).\]
So, if we prove the statement in the standard graded case we are done.

Let $S=\kk[X_1,\ldots ,X_n]$ be the standard graded polynomial ring, $I\subseteq S$ the homogeneous ideal such that $R=S/I$ and $J=I+(f)$, where $f\in S$ is a representative of $x\in R$.  We will show that $\cd(S;J)\leq n-3$, and therefore conclude by \cite[Theorem 2.3]{KLZ}.

Consider the short graded exact sequence $0\rightarrow R(-i)\xrightarrow{\cdot x} R \rightarrow R/xR\rightarrow 0$. By taking the long exact sequence in local cohomology
\[\ldots \rightarrow H^2_{\mm}(R)_{-i} \rightarrow H^2_{\mm}(R)_0\rightarrow H^2_{\mm}(R/xR)_0\rightarrow H^3_{\mm}(R)_{-i}\rightarrow \ldots\]
By \cite[Proposition 5.5]{DMV}, $H^2_{\mm}(R)_{-i}=H^3_{\mm}(R)_{-i}=0$. So $H^2_{\mm}(R/xR)_0\cong H^2_{\mm}(R)_0=0$, that is $H^1(Y,\O_Y)=0$, where $Y = \Proj R/xR$. By the exponential sequence we have an injection 
\[H^1(Y;\Z)\hookrightarrow H^1(Y,\O_Y),\] 
therefore $H^1(Y;\Z)=0$. It is now enough to show that $H^{n-2}_J(S)$ is supported at the irrelevant ideal $(X_1,\ldots ,X_n)\subseteq S$, and we will conclude by \cite[Theorem 2.8]{Og}, \cite[Chapter IV, Theorem 1.1]{hartder} and the universal coefficient theorem. Let $P\in\Proj S$ containing $J$, and denote by $\pp=P/I$. If $\height(P)=n-1$, then $R_{\pp}$ has dimension at least 3, and satisfies $(S_2)$ (indeed $(S_3)$). So the punctured spectrum of $\widehat{S_P/JS_P}\cong \widehat{R_{\pp}}/x\widehat{R_{\pp}}$ is connected by \cite[Proposition 1.13]{Va1}, thus $H^{n-2}_J(S)_P=H^{n-2}_{JS_P}(S_P)=0$ by \cite[Theorem 2.9]{HL}. If $P\in \Proj S$ is a prime ideal containing $J$ of height $n-2$, in the same way the Hartshorne-Lichtembaum vanishing theorem will show that $H^{n-2}_J(S)_P$ vanishes, and if $\height(P)<n-2$ the Grothendieck vanishing theorem will do the job.
\end{proof}

\begin{remark}
In Example \ref{quasi}, $\Proj R=X$ is the blow-up of $\PP^3$ at a point; in particular it is a smooth variety with $H^1(X,\O_X)=0$, so it cannot serve to solve Problem \ref{p:lyu}. In general, the construction of Theorem \ref{t:bmm} produces a smooth rational variety $\Proj R$, so it cannot be used to answer negatively \ref{q:main}. 
\end{remark}

Theorem \ref{t:pos} gives hope that Question \ref{q:main} has a positive answer.

\section{Question \ref{q:main} has a negative answer}

Throughout this section, let us fix a polynomial ring $S=\kk[X_1,\ldots , X_n]$ and a monomial order $<$ on $S$.

\begin{proposition}\label{p:susp}
Assume that the answer to Question \ref{q:main} is positive. Then, if $\pp\subseteq S$ is a prime ideal such that $\dim S/\pp\geq 3$ and $\init_<(\pp)=I_\D$ is square-free, $ \widetilde{H_1}(\D;\kk)=0$.
\end{proposition}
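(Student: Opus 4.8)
The plan is to realize the Stanley--Reisner ring $\kk[\D]$ as a hyperplane section $R/xR$ of a suitable $\N$-graded \emph{domain} $R$ of dimension at least $4$, and then let a positive answer to Question~\ref{q:main} do the work. First I would translate the target: by Example~\ref{ex:SR}, $\Lyu(\kk[\D])$ is the nerve of $\D$ with respect to its facets, hence homotopy equivalent to $\D$ by Borsuk's nerve lemma, so $\widetilde{H_1}(\D;\kk)\cong\widetilde{H_1}(\Lyu(\kk[\D]);\kk)$. Thus it suffices to exhibit an $\N$-graded domain $R$ with $\dim R\geq 4$ and a graded element $x\in R$ of positive degree such that $R/xR\cong\kk[\D]$; then the hypothesis gives $\widetilde{H_1}(\Lyu(R/xR);\kk)=0$, i.e.\ $\widetilde{H_1}(\Lyu(\kk[\D]);\kk)=0$, and we are done.

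Such an $R$ is produced by a Gr\"obner degeneration. Choose a weight vector $w\in\N^n$ with strictly positive entries representing $<$ on $\pp$, that is $\init_w(\pp)=\init_<(\pp)=I_\D$; such a $w$ exists because the open Gr\"obner cone of $<$ for $\pp$ is nonempty, so it contains an integral point $w_0$, and $w_0+\e(1,\ldots,1)$ still lies in that cone for small $\e>0$ while being strictly positive. Adjoin a new variable $t$, grade $S[t]$ by $\deg X_i=w_i>0$ and $\deg t=1$, let $\widetilde{\pp}\subseteq S[t]$ be the $w$-homogenization of $\pp$, and set $R=S[t]/\widetilde{\pp}$ and $x=t$. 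Then $R$ is an $\N$-graded $\kk$-algebra with $R_0=\kk$ (all $w_i$ positive) and $x\in R_1$.

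It remains to verify the three properties of $R$. First, $R$ is a domain: $\widetilde{\pp}$ is $t$-saturated, so $t$ is a nonzerodivisor on $R$ and $R$ embeds in $R[t^{-1}]\cong(S/\pp)[t,t^{-1}]$, which is a domain; equivalently, the homogenization of a prime ideal is prime. Second, $\dim R\geq 4$: localization does not raise Krull dimension, so $\dim R\geq\dim R[t^{-1}]=\dim (S/\pp)[t,t^{-1}]=\dim S/\pp+1\geq 4$. Third, $R/xR\cong\kk[\D]$: reducing the homogenization modulo $t$ recovers the initial ideal, $\widetilde{\pp}+(t)=\init_w(\pp)+(t)$ inside $S[t]$, hence $R/xR\cong S/\init_w(\pp)=S/I_\D=\kk[\D]$. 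With these three facts in hand, the argument of the first paragraph concludes the proof.

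I do not expect a genuine obstacle here: the whole content is the bridge between the nerve lemma on one side and the hypothetical answer to Question~\ref{q:main} on the other. The only points that need care are the standard facts about $w$-homogenization used above --- that $\widetilde{\pp}$ is again prime (equivalently, $t$-saturated with $R[t^{-1}]$ a domain), that $\widetilde{\pp}+(t)$ equals $\init_w(\pp)+(t)$, and that the representing weight $w$ can be chosen with all entries positive so that $R$ is honestly $\N$-graded with $R_0=\kk$. All three are routine; the last is the one most easily overlooked.
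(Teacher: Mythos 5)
Your proposal is correct and follows essentially the same route as the paper: pick a weight $w$ representing $<$ on $\pp$, pass to the $w$-homogenization in $S[t]$ to get an $\N$-graded domain $R$ of dimension at least $4$ with $R/tR\cong \kk[\D]$, apply the hypothetical positive answer to Question \ref{q:main}, and conclude via the nerve lemma identification of Example \ref{ex:SR}. The extra care you take (strict positivity of $w$ so that $R_0=\kk$, primeness of the homogenization, $\widetilde{\pp}+(t)=\init_w(\pp)+(t)$) is exactly what the paper leaves implicit, and it is handled correctly.
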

\begin{proof}
Take a weight $w=(w_1,\ldots ,w_n)\in \N^n$ such that $\init_<(\pp)=\init_w(\pp)$ (see, for example, \cite[Proposition 1.11]{sturmfels}). Let $Z$ be a new indeterminate over $S$, and $P=S[Z]$ the polynomial ring with grading
$\deg(X_i)=w_i$ and $\deg(Z)=1$. The homogenization $\homo_w(\pp)\subseteq P$ of $\pp\subseteq S$ is still a prime ideal. So, $R=P/\homo_w(\pp)$ is a $\N$-graded domain of dimension at least $4$. If the answer to Question \ref{q:main} were positive, then 
\[\widetilde{H_1}(\Lyu(R/\overline{Z}R);\kk)=0.\]
But $R/\overline{Z}R\cong S/\init_<(\pp)=\kk[\D]$, and we saw in Example \ref{ex:SR} that the singular homologies of $\Lyu(\kk[\D])$ and $\D$ agree.
\end{proof}

The conclusion of Proposition \ref{p:susp} looks suspicious. It is related to the following:

\begin{question}\label{q:susp}
Is there a prime ideal $\pp\subseteq S$ such that $\init_<(\pp)$ is square-free but $S/\pp$ is not Cohen-Macaulay? 
\end{question}

At a first thought, probably your answer to the above question would be: ``Of course!''. And in fact, as we will briefly see, \ref{q:susp} has an affirmative answer; however, before we wish to outline that most 
known examples of prime ideals with square-free initial ideal are Cohen-Macaulay:

\begin{enumerate}
\item Determinantal-type ideals (\cite{St,HT,Co1,Co2,KM});
\item Orlik-Terao ideals (\cite{Pr,PS};
\item Multigraded multiplicity-free ideals (\cite{Br, CDG});
\item Prime ideals $\pp\subseteq S$ such that $\dim S/\pp\leq 2$ (\cite{KS}).
\end{enumerate} 

We will soon provide an example of a prime ideal as in \ref{q:susp} and contradicting the conclusion of \ref{p:susp} (and so its assumption). We need to know that ``Segre products and Gr\"obner deformations commute", which is a special case of the results achieved by Shibuta in \cite{Sh}. We will give a simple proof of the case we need for the convenience of the reader.

Let $A=\kk[Y_0,\ldots ,Y_a]$ and $B=\kk[Z_0,\ldots ,Z_b]$ be two standard graded polynomial rings over $\kk$. Given two homogeneous ideals $I\subseteq A$ and $J\subseteq B$, their Segre product $I\sharp J$ 
is the ideal of $P=\kk[X_{ij}:i=0,\ldots ,a, j=0,\ldots ,b]$ defined as the kernel of the $\kk$-algebra homomorphoism from $P$ to the Segre product $(A/I)\sharp (B/J)=\bigoplus_{d\in\N}(A/I)_d\otimes_{\kk} (B/J)_d$ mapping $X_{ij}$ to $\overline{Y_i}\cdot \overline{Z_j}$. 
%
%
Let $>_a$ the lexicographical monomial order on $A$ extending the linear order
$Y_0>\ldots >Y_a$ and $>_b$ the lexicographical monomial order on $B$ extending $Z_0>\ldots >Z_b$. Finally, let $>$ be the lexicographical monomial order on $P$ extending the linear order
$X_{00}>X_{01}>\ldots >X_{0b}>X_{10}>\ldots >X_{1b}>\ldots >X_{a0}>\ldots >X_{ab}$.

\begin{proposition}\label{p:segre}
With the above notations, if $\init_{<_a}(I)$ and $\init_{<_b}(J)$ are square-free, then $\init_{<}(I\sharp J)$ is square-free. More precisely, if $u_1,\ldots ,u_r\in A$ (resp. $v_1,\ldots ,v_s\in B$) are square-free monomial generators of $\init_{<_a}(I)$ (resp. of $\init_{<_b}(J)$), then the following are
square-free monomial generators of $\init_{<}(I\sharp J)$:


\begin{enumerate}
\item $u_i(X_{0i_0},\ldots ,X_{ai_a})$ such that $i=1,\ldots ,r$ and $b\geq i_0\geq \ldots \geq i_a\geq 0$;
\item $v_j(X_{j_00},\ldots ,X_{j_bb})$ such that $j=1,\ldots ,s$ and $a\geq j_0\geq \ldots \geq j_b\geq 0$;
\item $X_{ij}X_{hk}$ such that $0\leq i<h\leq a$ and $0\leq j<k\leq b$.
\end{enumerate}
\end{proposition}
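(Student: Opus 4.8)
The plan is to prove that the ideal $L\subseteq P$ generated by the monomials listed in (1), (2) and (3) equals $\init_<(I\sharp J)$; since every monomial in that list is square-free, this yields the statement. I will deduce $L=\init_<(I\sharp J)$ from two facts: (a) $L\subseteq\init_<(I\sharp J)$, and (b) $\dim_\kk(P/L)_d=\dim_\kk\big(P/(I\sharp J)\big)_d$ for all $d\in\N$. The starting point is the classical description of the Segre product of two polynomial rings: when $I=J=0$ the ideal $I\sharp J$ is the ideal $I_2(X)$ of $2\times 2$ minors of the matrix $(X_{ij})$, the minors $X_{ij}X_{hk}-X_{ik}X_{hj}$ with $i<h$, $j<k$ are a Gröbner basis of it for $<$, and their leading monomials are exactly the monomials in (3); hence $\init_<(I_2(X))$ is generated by (3), the standard monomials of $P/I_2(X)$ are the monomials divisible by no monomial in (3), and $\psi\colon P\to\kk[Y_0,\ldots,Y_a]\sharp\kk[Z_0,\ldots,Z_b]$, $X_{ij}\mapsto Y_iZ_j$, identifies them, degree by degree, with the monomials $Y^\alpha Z^\beta$ with $|\alpha|=|\beta|$; concretely the standard monomial over $Y^\alpha Z^\beta$ is obtained by sorting the indices of $Y^\alpha$ increasingly, those of $Z^\beta$ decreasingly, and pairing them up. Note that $I_2(X)\subseteq I\sharp J$ always, since $\psi$ factors through $P/(I\sharp J)=(A/I)\sharp(B/J)$.

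For (a), the monomials in (3) are leading terms of elements of $I_2(X)\subseteq I\sharp J$. For a monomial in (1): let $u=u_i=\prod_{t\in T}Y_t$ be a square-free generator of $\init_{<_a}(I)$ and let $b\ge i_0\ge\cdots\ge i_a\ge 0$ be the chosen weights; pick $g\in I$ with $\init_{<_a}(g)=u$, and put $n=\prod_{t\in T}Z_{i_t}\in B$, a monomial of degree $|T|=\deg g$. Writing $g=\sum_\alpha c_\alpha Y^\alpha$, replace each $Y^\alpha$ by the unique standard monomial $M_\alpha$ of $P/I_2(X)$ with $\psi(M_\alpha)=Y^\alpha n$; then $\widetilde g:=\sum_\alpha c_\alpha M_\alpha$ satisfies $\psi(\widetilde g)=g\cdot n$, which dies in $(A/I)\sharp(B/J)$ because $g\in I$, so $\widetilde g\in I\sharp J$. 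The key claim is that the $<$-largest of the $M_\alpha$ is the one attached to $Y^\alpha=u=\init_{<_a}(g)$: if $Y^{\alpha'}<_a Y^\alpha$ and $n$ is fixed, then $M_{\alpha'}< M_\alpha$, because the first row $i_0$ in which the exponent vectors of $Y^{\alpha'}$ and $Y^\alpha$ differ (with $Y^\alpha$ larger there) is also the first row in which $M_{\alpha'}$ and $M_\alpha$ differ — the rows below $i_0$ get paired with exactly the same top columns of $n$ — and in row $i_0$ the column multiset of $M_\alpha$ contains that of $M_{\alpha'}$, with more entries, so at the smallest column $j^\ast$ occurring in row $i_0$ of $M_\alpha$ the variable $X_{i_0,j^\ast}$ has strictly larger exponent in $M_\alpha$. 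Hence $\init_<(\widetilde g)=M_u$, and since $u$ is square-free $M_u=\prod_{t\in T}X_{t,i_t}=u_i(X_{0i_0},\ldots,X_{ai_a})$, the monomial in (1). The monomials in (2) are handled symmetrically, exchanging the two factors. Thus $L\subseteq\init_<(I\sharp J)$.

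For (b), a standard monomial of $P/L$ in degree $d$ is a standard monomial $M$ of $P/I_2(X)$ (divisible by no monomial in (3)) that in addition is divisible by no monomial in (1) or (2); writing $\psi(M)=Y^\alpha Z^\beta$, I claim $M$ is divisible by some monomial in (1) iff $Y^\alpha\in\init_{<_a}(I)$, and by some monomial in (2) iff $Z^\beta\in\init_{<_b}(J)$. If $\prod_{t\in T}X_{t,i_t}$ divides $M$, then each $Y_t$ ($t\in T$) divides $Y^\alpha$, so $u_i=\prod_{t\in T}Y_t$ divides $Y^\alpha$. Conversely, if $Y^\alpha\in\init_{<_a}(I)$, choose a square-free generator $u_i=\prod_{t\in T}Y_t$ dividing it and write $M=\prod_l X_{p_l,c_l}$ with $p_1\le\cdots\le p_k$, $c_1\ge\cdots\ge c_k$ (possible since $M$ is standard); for $t\in T$ pick $l(t)$ with $p_{l(t)}=t$ and set $i_t:=c_{l(t)}$ — because rows are sorted increasingly and columns decreasingly, $t\mapsto i_t$ is weakly decreasing on $T$, extends to weights $b\ge i_0\ge\cdots\ge i_a\ge 0$, and $u_i(X_{0i_0},\ldots,X_{ai_a})=\prod_{t\in T}X_{t,i_t}$ divides $M$. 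The same argument with the factors exchanged settles (2). Therefore $\psi$ puts the standard monomials of $P/L$ in degree $d$ in bijection with the pairs $(Y^\alpha,Z^\beta)$, $|\alpha|=|\beta|=d$, with $Y^\alpha$ a standard monomial of $A/\init_{<_a}(I)$ and $Z^\beta$ a standard monomial of $B/\init_{<_b}(J)$; hence $\dim_\kk(P/L)_d=\dim_\kk(A/I)_d\cdot\dim_\kk(B/J)_d=\dim_\kk\big((A/I)\sharp(B/J)\big)_d=\dim_\kk\big(P/(I\sharp J)\big)_d$. Combining (a) and (b) gives $L=\init_<(I\sharp J)$.

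The main obstacle is the leading-term compatibility claim in step (a): one must lift a chosen $g\in I$ to an element of $I\sharp J$ while controlling its initial monomial for $<$, and this comes down to showing that the ``sorted pairing'' defining the standard monomials of $P/I_2(X)$ is order-preserving with respect to $<_a$ on the first factor (and, symmetrically, to $<_b$ on the second). Square-freeness of $\init_{<_a}(I)$ and $\init_{<_b}(J)$ is used at two points: in (a), to ensure that the standard lift $M_u$ of a generator $u$ is again square-free and of the exact shape appearing in (1) or (2); and in (b), to make the divisibility criterion a condition only on the supports of $Y^\alpha$ and $Z^\beta$.
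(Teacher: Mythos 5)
Your overall strategy (prove the inclusion $L\subseteq\init_<(I\sharp J)$ by lifting generators, then force equality by a Hilbert--function count over the sorted standard monomials) is a genuinely different route from the paper's: the paper first reduces to the case where $I$ and $J$ are themselves square-free monomial ideals, then exhibits generators of $I\sharp J$ and checks Buchberger's criterion. Your step (b) is correct, and so is the monotonicity lemma underlying family (1): for a fixed monomial $n\in B$, the sorted lift is order-preserving with respect to $<_a$, because $<$ is lex with the variables listed row by row.

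The genuine gap is the sentence that family (2) is ``handled symmetrically, exchanging the two factors''. The order $<$ is not symmetric in the two factors (it is row-major lex), and the symmetric monotonicity statement is false. Take $a=1$, $b=2$, fix $m=Y_0Y_1$ and compare $Z_0Z_2>_bZ_1^2$: the sorted lifts are $X_{02}X_{10}$ and $X_{01}X_{11}$, and $X_{01}X_{11}>X_{02}X_{10}$ because the largest variable occurring, $X_{01}$, appears only in the former. Concretely, for $I=(0)\subseteq\kk[Y_0,Y_1]$ and $J=(Z_0Z_2+Z_1^2)$ your lift of $(Z_0Z_2+Z_1^2)\cdot Y_0Y_1$ is $X_{02}X_{10}+X_{01}X_{11}$, whose leading monomial is $X_{01}X_{11}$ and not the type-(2) monomial $X_{02}X_{10}$. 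Worse, the failure is not repairable by choosing smarter elements: the degree-two part of $I\sharp J$ is spanned by the three $2$-minors together with $X_{00}X_{02}+X_{01}^2$, $X_{02}X_{10}+X_{01}X_{11}$, $X_{10}X_{12}+X_{11}^2$; these six elements have pairwise distinct leading monomials, so every nonzero element of that space has its leading monomial among those six, and $X_{02}X_{10}$ is not one of them. Hence for this lex order the type-(2) monomial $X_{02}X_{10}$ does not lie in $\init_<(I\sharp J)$ at all, so the inclusion in your step (a) breaks for family (2) (the column-first lex order would repair family (2) but then family (1) fails in the same way). As written, your argument is complete only when family (2) is empty or $J$ is already a monomial ideal; in the general case this row/column asymmetry is exactly the delicate point, and it must be confronted by some other mechanism, such as the paper's preliminary reduction to monomial $I$ and $J$ --- a reduction which itself needs a justification (a containment of initial ideals, not just equality of Hilbert functions) that your example shows is subtle for this particular order.
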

\begin{proof}
Since $(A/I)\sharp (B/J)$ and $(A/\init_{<_a}(I))\sharp (B/\init_{<_b}(J))$ have the same Hilbert function, $\init_<(I\sharp J)$ and $\init_<(\init_{<_a}(I)\sharp \init_{<_b}(J))$ must coincide; so we can assume that $I=(u_1,\ldots ,u_r)$ and $J=(v_1,\ldots ,v_s)$. First of all we claim that $I\sharp J$ is generated by:
\begin{itemize}
\item[(i)] $u_i(X_{0i_0},\ldots ,X_{ai_a})$ such that $i=1,\ldots ,r$ and $b\geq i_0\geq \ldots \geq i_a\geq 0$;
\item[(ii)] $v_j(X_{j_00},\ldots ,X_{j_bb})$ such that $j=1,\ldots ,s$ and $a\geq j_0\geq \ldots \geq j_b\geq 0$;
\item[(iii)] $X_{ij}X_{hk}-X_{ik}X_{hj}$ such that $0\leq i<h\leq a$ and $0\leq j<k\leq b$.
\end{itemize}
To see this consider the homomorphisms of $\kk$-algebras $P\xrightarrow{\phi}A\sharp B$ and $A\sharp B\xrightarrow{\pi}(A/I)\sharp (B/J)$ defined by $\phi(X_{ij})=Y_iZ_j$ and $\phi(Y_iZ_j)=\overline{Y_iZ_j}$. Then, 
\[I\sharp J=\Ker(\pi\circ\phi)=\Ker(\phi)+\phi^{-1}(\Ker(\pi)).\]
It is well known that $\Ker(\phi)$ is the ideal of 2-minors of the matrix $X=(X_{ij})$, that is generated by the polynomials in (iii). Moreover, it is easy to see that a system of generators of $\Ker(\pi)$ is given by
$u_i\cdot m$ such that $i=1,\ldots ,r$ and $m\in B$ is a monomial of degree $\deg(u_i)$ and $m\cdot v_j$ such that $j=1,\ldots ,s$ and $m\in A$ is a monomial of degree $\deg(v_j)$. It is now clear that $\phi^{-1}(\Ker(\pi))$ is generated by the monomials in (i) and (ii). 

It remains to prove that the polynomials in (i),(ii) and (iii) form a Gr\"obner basis with respect to $<$. It is well known that the polynomials in (iii) alone are a Gr\"obner basis.
Therefore, it  just remains to prove that the $S$-polynomials of the polynomials in (iii) against the monomials in (i) and (ii) 
reduce to zero. This is simple to check.
\end{proof}

Finally, we are ready to negatively answer Question \ref{q:main}:

\begin{example1}\label{ex:main}
Let $A=\kk[Y_0,Y_1]$ and $B=\kk[Z_0,Z_1,Z_2]$. Furthermore, take 
\[g=Z_0Z_1Z_2+Z_1^3+Z_2^3\in B,\]
$I=\{0\}\subseteq A$ and $J=(g)\subseteq B$. Notice that $E=\Proj B/J$ is an elliptic curve of $\PP^2$
(not smooth, but irreducible), and that $\Proj P/(I\sharp J)\cong \PP^1\times E$. So $\pp=I\sharp J$ is a prime ideal.
Since $\init_{<_b}(g)=Z_0Z_1Z_2$, Proposition \ref{p:segre} implies that $\init_<(\pp)$ is generated by: 
\[X_{10}X_{11}X_{12}, \ X_{10}X_{11}X_{02}, \ X_{10}X_{01}X_{02}, \ X_{00}X_{01}X_{02}, \ \ \ \ \ X_{00}X_{11}, \ X_{00}X_{12}, \ X_{01}X_{12}.  \]

So $\init_<(\pp)=I_{\Delta}\subseteq P=\kk[X_{ij}:(i,j)\in \{0,1\}\times\{0,1,2\}]$ where $\Delta$ is the following simplicial complex on the vertices $\{0,1\}\times\{0,1,2\}$:

\bigskip

\bigskip

\centerline{\includegraphics[height=8cm]{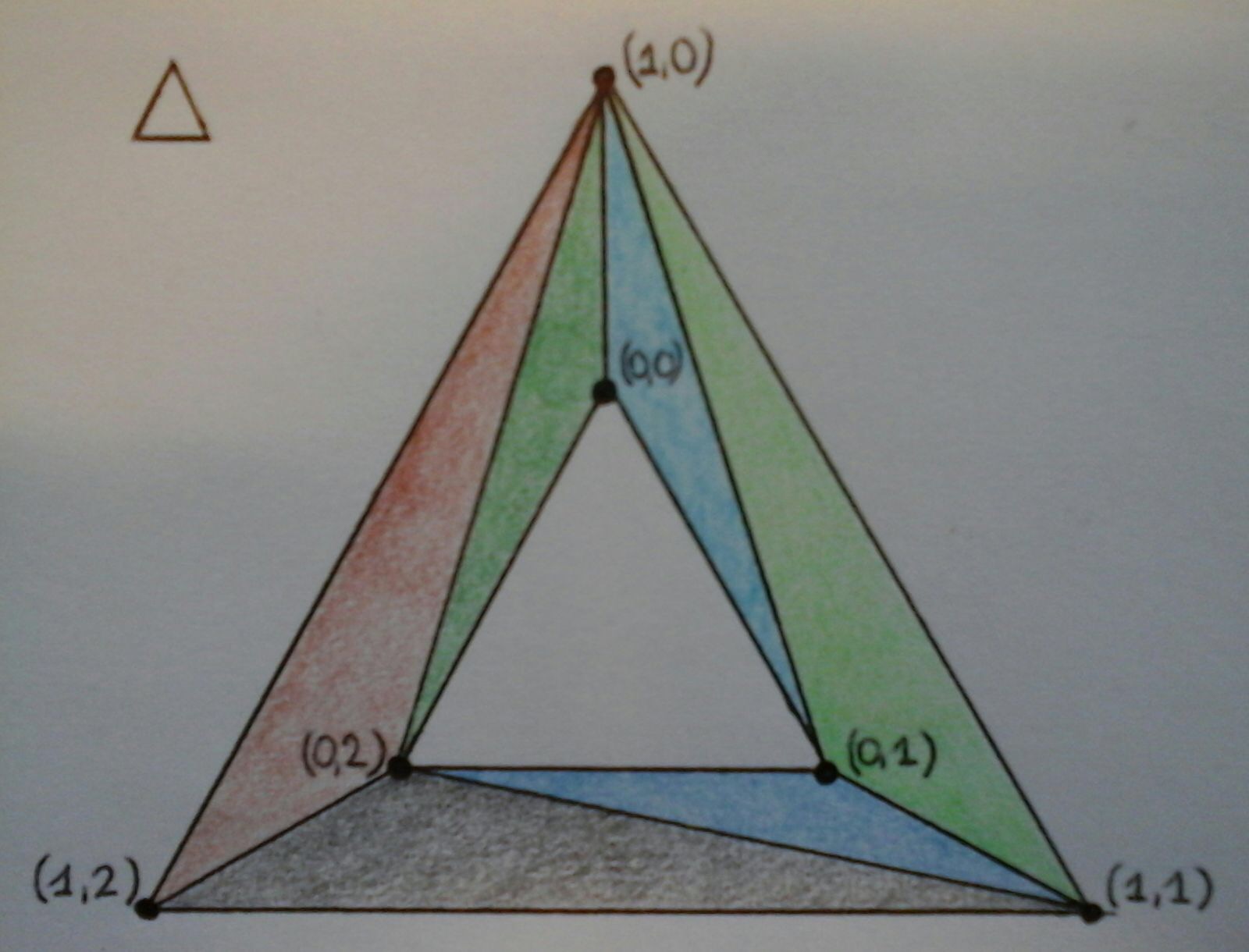}}

\bigskip

Since $\widetilde{H_1}(\Delta;\kk)\cong \kk$, this example contradicts the conclusion of Proposition \ref{p:susp}, so it provides a negative answer to Question \ref{q:main}. Notice that it also provides a positive answer to Question \ref{q:susp}: in fact $P/\pp$ is a $3$-dimensional domain and $\init_<(\pp)$ is square-free, but $P/\pp$ is not Cohen-Macaulay by \cite[Theorem 4.1.5]{GW}.

Using the notations of Proposition \ref{p:susp}, $R=P[Z]/\hom_w(\pp)$  is a $\N$-graded 4-dimensional domain and $\widetilde{H_1}(\Lyu(R/\overline{Z}R);\kk)\neq 0$.
\end{example1}

I would like to conclude the paper by proposing two problems:

\begin{problem}\label{p:final}
Given a connected simplicial complex $\Delta$ and $d> \dim \Delta+1$, find a $d$-dimensional standard graded domain $R$ and a linear form $x\in R$ such that:
\begin{compactitem}
\item[{\rm (i)}] $\Lyu(R/xR)=\Delta$;
\item[{\rm (ii)}] If $\Min(xR)=\{\pp_1, \ldots, \pp_s\}$ and $\{i_1, \ldots, i_k\}\in \Delta$, then $\height(\pp_{i_1}+\ldots +\pp_{i_k})=k$.
\end{compactitem}
\end{problem}

Notice that a positive answer to the above problem would supply a positive answer to Lyubeznik's problem \ref{p:lyu}. In order to solve it, one might try to deform the examples coming from Theorem \ref{t:bmm}.
The last problem I want to propose, that is related to Question \ref{q:susp}, came out from discussions with Alexandru Constantinescu and Emanuela De Negri, who I thank:

\begin{problem}
Find a prime ideal $\pp\subseteq S$ such that $\init_<(\pp)$ is square-free, $\Proj S/\pp$ is smooth, but $S/\pp$ is not Cohen-Macaulay. 
\end{problem}

If one found an example of a smooth Calabi-Yau projective variety with a reduced Gr\"obner degeneration, then the same construction of Example \ref{ex:main} would provide an answer to the above problem. Unfortunately I am not aware of such Calabi-Yau varieties: for sure there is no elliptic curve in $\PP^2$ like this.

%


\end{document}